\newtheorem{theorem}{Theorem}
\newtheorem{lemma}{Lemma}
\newtheorem{false statement}{False statement}
\theoremstyle{definition}
\newtheorem{claim}{Claim}
\newtheorem{conjecture}{Conjecture}
\newtheorem{corollary}[theorem]{Corollary}
\newtheorem{problem}{Problem}
\newcounter{mathitem}
  {\begin{list}{{$(\roman{mathitem})$}}{
   \setcounter{mathitem}{0}
   \usecounter{mathitem}
   \setlength{\topsep}{0pt plus 2pt minus 0pt}
   \setlength{\parskip}{0pt plus 2pt minus 0pt}
   \setlength{\partopsep}{0pt plus 2pt minus 0pt}
   \setlength{\parsep}{0pt plus 2pt minus 0pt}
   \setlength{\leftmargin}{35pt}
   \setlength{\itemsep}{0pt plus 2pt minus 0pt}}}
  {\end{list}}
\date{\dateline{April 12, 2013}{}\\
\small Mathematics Subject Classifications: 05C38, 05C15, 05C20}
\begin{document}

\title{\bf\Large Notes on a conjecture of Manoussakis concerning Hamilton cycles in digraphs}

\date{}

\author{Bo Ning\thanks{Corresponding author. Email address: ningbo\_math84@mail.nwpu.edu.cn}\\
\small Department of Applied Mathematics, School of Science,\\
\small  Northwestern Polytechnical University, Xi'an, Shaanxi 710072, P.R.~China\\[2mm]}
\maketitle
\begin{abstract}
In 1992, Manoussakis conjectured that a strongly 2-connected digraph $D$ on $n$ vertices is hamiltonian if for every two distinct pairs of independent vertices $x,y$ and $w,z$ we have $d(x)+d(y)+d(w)+d(z)\geq 4n-3$. In this note we show that $D$ has a Hamilton path, which gives an affirmative evidence supporting this conjecture.

\medskip
\noindent {\bf Keywords:} Combinatorial problem; Hamilton cycle; Hamilton path; Digraph

\smallskip

\noindent {\bf Mathematics Subject Classification (2010): 05C38; 05C45}
\end{abstract}
\medskip
\section{Introduction}
In this note, we consider simple digraphs only. For convenience of the reader, we provide all necessary terminology and notation in one section, Section 2. For those not defined here, we refer the reader to \cite{Bang_Gutin}.

A basic topic in digraph theory is that of finding degree conditions for a digraph to be hamiltonian. In particular, Ghouila-Houri \cite{Ghouila_Houri} proved a fundamental theorem which states that every strongly connected digraph on $n$ vertices is hamiltonian if the degree of every vertex is at least $n$.

\begin{theorem}[\rm Ghouila-Houri \cite{Ghouila_Houri}]
Let $D$ be a strongly connected digraph on $n$ vertices. If $d(x)\geq n$ for any vertex $x\in V$, then $D$ is hamiltonian.
\end{theorem}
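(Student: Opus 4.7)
My plan is to argue by contradiction via a longest-cycle framework. Assume $D$ is strongly connected, satisfies $d(x) \geq n$ for every $x \in V$, and is not hamiltonian. Strong connectivity implies that $D$ contains a cycle, so let $C = v_0 v_1 \cdots v_{k-1} v_0$ be a longest cycle, and suppose for contradiction that $k < n$.

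The first step is structural. Since $V \setminus V(C) \neq \emptyset$ and $D$ is strongly connected, there is an \emph{ear} of $C$: a path $P = v_i u_1 u_2 \cdots u_s v_j$ with $s \geq 1$ and $u_1, \ldots, u_s \in V \setminus V(C)$. I would choose $P$ so that the number of internal vertices $s$ is minimum. Maximality of $C$ then forces the $v_i \to v_j$ arc along $C$ to have length at least $s+1$, since otherwise replacing that arc of $C$ by $P$ would produce a strictly longer cycle.

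The second step is a degree-counting argument on the ear endpoints $u_1$ and $u_s$. For each vertex $w$ in a suitable stretch of the long $v_i \to v_j$ arc of $C$, I would show that at least one of $w \to u_1$ or $u_s \to w'$ (for a shifted partner $w'$ of $w$ along $C$) is missing from $D$: the presence of both arcs would combine with $P$ and portions of $C$ to form a cycle longer than $C$. Converting these missing arcs into an upper bound on $d^-(u_1) + d^+(u_s)$, together with the trivial bounds $d^+(u_1), d^-(u_s) \leq n-1$, I would contrast against the hypothesis $d(u_1) + d(u_s) \geq 2n$ to derive a contradiction. This shows $k = n$, completing the proof.

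The main obstacle will be the forbidden-arc pairing: the injection from positions on $C$ into missing arcs must avoid double-counting, and the boundary case $s = 1$ (a single internal vertex, so $u_1 = u_s$) deserves special attention because both halves of the degree budget then come from a single vertex. Once the pairing is set up correctly, the remainder of the argument is pure arithmetic, and the exact threshold $d(x) \geq n$ appears as the tight constant at the end.
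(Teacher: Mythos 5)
First, note that the paper offers no proof of this statement: it is Ghouila--Houri's classical theorem, quoted with a citation, so there is nothing in the text to compare your argument against. Judged on its own merits, your sketch has a genuine gap, and in fact the arithmetic you set up cannot close. You propose to bound $d^-(u_1)+d^+(u_s)$ by a forbidden-arc pairing against $C$, and to combine this with the trivial bounds $d^+(u_1)\le n-1$ and $d^-(u_s)\le n-1$. But then the hypothesis $d(u_1)+d(u_s)\ge 2n$ only forces $d^-(u_1)+d^+(u_s)\ge 2n-2(n-1)=2$, and this is already satisfied by the two arcs $v_iu_1$ and $u_sv_j$ of the ear itself. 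So no upper bound obtained from missing arcs can contradict it; the trivial bounds are far too generous for the threshold $d(x)\ge n$ to bite.

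The deeper obstruction is that your count only restricts arcs between $\{u_1,u_s\}$ and (a stretch of) $C$, while the degree hypothesis concerns total degree, and vertices outside $C$ may carry almost all of their degree inside $D-C$. Even with the sharpest available bound on $C$-incident arcs --- the insertion lemma (Lemma 1 of this paper) gives $d_{C[v_j,v_i]}(u_1)\le |C[v_j,v_i]|+1$, and minimality of the ear kills adjacencies to $C(v_i,v_j)$ --- one only gets $d(u_1)\le 2n-k-\lambda$, which exceeds $n$ whenever $k+\lambda\le n$, i.e.\ whenever the longest cycle is short. A correct argument of this flavour must pair the outside vertex $y_1$ with a vertex $s$ \emph{on} $C$ in the skipped segment that cannot be inserted into $C[x_{p+\lambda},x_p]$, and crucially must bound the \emph{joint} degree of $y_1$ and $s$ into the component $H$ of $D-C$ (no path $y_iws$ or $swy_i$ exists, by minimality of the $C$-path); this is exactly the content of Lemma 2 and its inequality (5), which yields $d(y_1)+d(s)\le 2n-2$ and contradicts $d(y_1)+d(s)\ge 2n$ immediately. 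Your pair $(u_1,u_s)$, both lying in $H$, admits no such joint bound, and the boundary case $s=1$ reduces to a single-vertex count that provably fails for short longest cycles. I would recommend either deriving the theorem from Meyniel's theorem, or reworking your argument around the insertion lemma and a non-insertable vertex of the skipped segment, as in Section 3 of the paper.
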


Woodall \cite{Woodall} proved the following result, which improved Ghouila-Houri's theorem.

\begin{theorem}[\rm Woodall \cite{Woodall}]
Let $D$ be a digraph on $n$ vertices. If $d^+(x)+d^-(y)\geq n$ for any pair of vertices $x$ and $y$ such that
$xy\notin A(D)$, then $D$ is hamiltonian.
\end{theorem}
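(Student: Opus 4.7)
I would adapt the longest-cycle proof of Theorem~1 (Ghouila-Houri), invoking Woodall's pairwise hypothesis only where a non-arc appears. First, observe that $D$ must be strongly connected: if not, the acyclic condensation has a source strong component $S^+$ and a distinct sink strong component $S^-$. Picking $x \in S^-$ and $y \in S^+$ one has $xy \notin A(D)$, so the hypothesis yields $d^+(x) + d^-(y) \geq n$. But $N^+(x) \subseteq S^- \setminus \{x\}$ and $N^-(y) \subseteq S^+ \setminus \{y\}$ force $d^+(x) + d^-(y) \leq |S^-| + |S^+| - 2 \leq n - 2$, a contradiction.

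Assume now for contradiction that $D$ is not hamiltonian. Let $C = v_1 v_2 \cdots v_\ell v_1$ be a longest cycle with $\ell < n$, and pick $u \in V(D) \setminus V(C)$ (which has both in- and out-neighbours on $C$ by strong connectivity). The key insertion lemma reads: if $v_i u \in A(D)$ and $u v_{i+1} \in A(D)$ for some $i$ (indices mod $\ell$), then $v_1 v_2 \cdots v_i u v_{i+1} \cdots v_\ell v_1$ is a strictly longer cycle, contradicting the choice of $C$. Hence $N_C^+(u)$ and the cyclic shift $N_C^-(u) + 1$ are disjoint subsets of $\mathbb{Z}/\ell\mathbb{Z}$, and $|N_C^+(u)| + |N_C^-(u)| \leq \ell$.

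For each index $j$ with $u v_j \notin A(D)$, Woodall's hypothesis gives $d^+(u) + d^-(v_j) \geq n$, and symmetrically for $v_j u \notin A(D)$. Summing such inequalities over all non-arcs between $u$ and $V(C)$, and estimating the right-hand sides by the trivial bounds $d^-(v_j), d^+(v_j) \leq n - 1$ together with the insertion-lemma bound on $d(u)$, one arrives at an inequality that, after cancellation, forces $\ell \geq n$, contradicting $\ell < n$.

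The main obstacle is extracting a \emph{strict} contradiction from the summation: a naive count only gives $\ell \leq n$. One must balance the two families of inequalities $d^+(u) + d^-(v_j) \geq n$ and $d^+(v_j) + d^-(u) \geq n$ carefully, so that the insertion-lemma deficit $\ell - |N_C^+(u)| - |N_C^-(u)|$ produces the decisive unit of slack; this may require invoking strong connectivity a second time to exhibit an additional non-arc between $u$ and $V(C)$ that had been overlooked, or averaging over several choices of $u \in V(D) \setminus V(C)$ when $n - \ell \geq 2$.
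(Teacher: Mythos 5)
Your opening two steps are sound: the non-arc from a sink component to a source component of the condensation does force strong connectivity, and the insertion observation $|N^+_C(u)|+|N^-_C(u)|\le \ell$ is correct. But the proof stops exactly where it would have to begin. The decisive step --- ``summing such inequalities over all non-arcs between $u$ and $V(C)$ \dots one arrives at an inequality that forces $\ell\ge n$'' --- is not an argument, and as sketched it cannot be made into one: each inequality $d^+(u)+d^-(v_j)\ge n$ is paired with the trivial bound $d^-(v_j)\le n-1$, so after summation the right-hand sides cancel almost entirely and you learn essentially nothing (e.g.\ $d^+(u)\ge 1$). Even in the extremal case $A\cup B=\{1,\dots,\ell\}$, $A\cap B=\emptyset$, combining the two families with $d(u)\le |A|+|B|+2(n-\ell-1)$ only yields $\ell\le 2n-4$. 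You correctly sense this (``a naive count only gives $\ell\le n$''), but the proposed repairs (a second use of strong connectivity, averaging over several $u$) are not carried out and do not obviously supply the missing unit of slack. A proof that cannot name where its contradiction comes from is not yet a proof.

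The missing idea is visible in the machinery this paper assembles for Theorem 5. Woodall's hypothesis, applied in both directions to a \emph{nonadjacent} pair $x,y$, gives $d(x)+d(y)=\bigl(d^+(x)+d^-(y)\bigr)+\bigl(d^+(y)+d^-(x)\bigr)\ge 2n$; together with your strong-connectivity step this shows Woodall's theorem is an immediate corollary of Meyniel's theorem (Theorem 3), which is how the paper frames it. If you want a self-contained argument, you must localize the contradiction to one specific nonadjacent pair rather than averaging over $V(C)$: take a minimal $C$-path $x_p y_1\cdots y_t x_{p+\lambda}$ as in Section 2, note that $\lambda\ge 2$ and that some $s\in C(x_p,x_{p+\lambda})$ cannot be inserted into $C[x_{p+\lambda},x_p]$ (else Lemma 3 extends $C$), and apply the path-insertion bound of Lemma 1 to both $y_1$ and $s$ to get $d(y_1)+d(s)\le 2n-2$ (Lemma 2). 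Since $y_1$ and $s$ are nonadjacent by minimality of $\lambda$, this contradicts $d(y_1)+d(s)\ge 2n$. The essential ingredient your plan lacks is precisely this path version of the insertion lemma applied to a minimal $C$-path; the cycle-insertion bound for a single outside vertex $u$ is not strong enough on its own.
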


Meyniel \cite{Meyniel} generalized both theorems of Ghoulia-Houri and Woodall. Bondy and Thomassen \cite{Bondy_Thomassen} gave a new proof of Meyniel's theorem by proving a slightly stronger result. For another proof of Meyniel's theorem, see \cite{Overbeck}.

\begin{theorem}[\rm Meyniel \cite{Meyniel}]
Let $D$ be a strongly connected digraph on $n$ vertices. If $d(x)+d(y)\geq 2n-1$ for any pair of nonadjacent vertices in $D$, then $D$ is hamiltonian.
\end{theorem}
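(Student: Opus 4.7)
The plan is to proceed by contradiction: assume that $D$ is strongly connected and satisfies $d(x)+d(y)\ge 2n-1$ for every pair of nonadjacent vertices, yet admits no Hamilton cycle. Let $C=v_1v_2\cdots v_cv_1$ be a longest cycle in $D$, so that $c<n$, and put $H:=V(D)\setminus V(C)\neq\emptyset$; indices on $C$ are read modulo $c$.

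The first step is the standard insertion lemma following from the maximality of $C$: for every $v\in H$ and every $i$, the arcs $v_iv$ and $vv_{i+1}$ cannot both be present in $A(D)$, since otherwise inserting $v$ between $v_i$ and $v_{i+1}$ yields a cycle of length $c+1$. Hence the in-indices $\{i:v_iv\in A(D)\}$ and the out-indices $\{i:vv_{i+1}\in A(D)\}$ are disjoint subsets of $\{1,\ldots,c\}$, so $v$ contributes at most $c$ arcs to $V(C)$ in total. Using strong connectivity, I would next select a shortest path $P=v_p x_1 x_2\cdots x_s v_q$ with $x_1,\ldots,x_s\in H$; the minimality of $s$ propagates further forbidden patterns involving the endpoints $x_1,x_s$ and the arcs of $C$ near $v_p,v_q$.

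The heart of the argument is then to apply the degree-sum hypothesis to a judiciously chosen nonadjacent pair. A natural candidate is $\{x_1,v_{p+1}\}$: the insertion lemma gives $x_1v_{p+1}\notin A(D)$, and the shortness of $P$ combined with a rotation of $C$ gives $v_{p+1}x_1\notin A(D)$. Combining the lower bound $d(x_1)+d(v_{p+1})\ge 2n-1$ with the upper bound $c$ on the number of arcs between $x_1$ and $V(C)$, the bound $2(n-c-1)$ on the number of arcs between $x_1$ and $H\setminus\{x_1\}$, and the analogous constraints on $v_{p+1}$ (whose arcs into $H$ are limited by applying the insertion argument to every $x_k\in H$), I would deduce a numerical contradiction.

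The main obstacle is the case analysis required to guarantee that the chosen pair is genuinely nonadjacent and that the combined upper bounds fall strictly below $2n-1$. In particular, the subcase $s=1$ behaves differently from $s\ge 2$, and is typically handled by switching the chosen pair to $\{x_1,x_s\}$ or to $\{v_{q-1},x_1\}$ and running a symmetric estimate. Verifying the tightness of these bounds — specifically, that every loss in the naive count corresponds to a further forbidden insertion pattern — is the combinatorial crux; the remaining steps are routine bookkeeping of cyclic indices and in-/out-degree counts.
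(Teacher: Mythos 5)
The paper does not prove Theorem~3 at all --- it is quoted from Meyniel, with the short proof attributed to Bondy and Thomassen --- but the machinery of Section~3 (Lemmas~1--3 and Claim~1) is exactly that proof, so your attempt should be measured against it. Your setup (longest cycle $C$, component $H$ outside, a path $P=v_px_1\cdots x_sv_q$ through $H$, the insertion lemma giving $d_C(v)\le c$ for $v\in H$) is the right start, but the argument breaks at the crux you yourself flag: the pair $\{x_1,v_{p+1}\}$ does not yield a numerical contradiction. The vertex $v_{p+1}$ lies on the cycle and is subject to no useful degree restriction: it may satisfy $d_C(v_{p+1})=2(c-1)$ and $d_H(v_{p+1})=2(n-c)$, i.e.\ $d(v_{p+1})$ can be as large as $2n-2$ all by itself, so $d(x_1)+d(v_{p+1})$ can comfortably exceed $2n-1$ and nothing is contradicted. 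The partner of $x_1$ must instead be a vertex $s$ strictly between $v_p$ and $v_q$ on $C$ that \emph{cannot be inserted into the complementary segment} $C[v_q,v_p]$; only for such an $s$ does Lemma~1 give $d_{C[v_q,v_p]}(s)\le|C[v_q,v_p]|+1$, which together with $d_{C[v_q,v_p]}(x_1)\le|C[v_q,v_p]|+1$, $d_{C(v_p,v_q)}(x_1)=0$, $d_{C(v_p,v_q)}(s)\le 2(\lambda-2)$ and $d_H(x_1)+d_H(s)\le 2(|H|-1)$ sums to $d(x_1)+d(s)\le 2n-2<2n-1$. The existence of such an $s$ is itself nontrivial: if every vertex of the gap could be inserted into $C[v_q,v_p]$, one needs the Berman--Liu lemma (Lemma~3) to absorb them all simultaneously and then splice in $P$ to contradict the maximality of $C$. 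Your sketch contains neither the identification of this vertex nor the existence argument.

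A second, related defect is the minimality criterion. You minimize $s$, the number of internal vertices of $P$, but the proof requires minimizing the gap $\lambda$ (the number of cycle vertices skipped), as in the paper's definition of a $C$-path. Minimality of $s$ does not rule out $v_{p+1}x_1\in A(D)$, nor arcs between $x_1$ and interior vertices of the gap, nor the two-arc detours $x_1ws$ and $swx_1$ through $H$; all three exclusions are needed --- the first two to certify that the chosen pair is genuinely nonadjacent (so that the hypothesis applies to it at all), and the third to obtain the bound $d_H(x_1)+d_H(s)\le 2(|H|-1)$ rather than the useless $2|H|$. With $\lambda$ minimal, each of these configurations produces a $C$-path with a strictly smaller gap, which is the contradiction that drives the whole count. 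Repairing your proposal therefore amounts to replacing your shortest path by a gap-minimal $C$-path and your vertex $v_{p+1}$ by a non-insertable vertex of the gap, at which point you recover precisely Lemma~2 of the paper.
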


Manoussakis \cite{Manoussakis} gave another generalization of Woodall's theorem as follows.

\begin{theorem}[\rm Manoussakis \cite{Manoussakis}]
Let $D$ be a strongly connected digraph on $n$ vertices. For any triple of vertices $x,y,z\in V$, where $x$ is nonadjacent to $y$, if there hold $d(x)+d(y)+d^+(x)+d^-(z)\geq 3n-2$ (if $xz\notin A$) and $d(x)+d(y)+d^+(z)+d^-(x)\geq 3n-2$ (if $zx\notin A$), then $D$ is hamiltonian.
\end{theorem}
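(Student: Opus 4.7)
The natural approach is proof by contradiction combined with the longest-cycle extremal method standard in this area. Assume $D$ satisfies the hypothesis but has no Hamilton cycle, and let $C=v_1v_2\cdots v_cv_1$ be a longest cycle of $D$, so that $c<n$. Strong connectivity provides a vertex $u\notin V(C)$ together with a shortest path from $V(C)$ to $V(C)$ whose second vertex is $u$.

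The first step is the classical insertion observation: if $v_k\to u$ and $u\to v_{k+1}$ both held for some index $k$, then $v_1\cdots v_k u v_{k+1}\cdots v_c v_1$ would be longer than $C$, contradicting maximality. Hence the sets of indices at which $u$ has an in- or an out-neighbour on $C$ avoid each other cyclically; in particular
$$d^+_C(u)+d^-_C(u)\le c.$$
Finer interlacing restrictions can be derived in the same way and will be used to bound degrees of carefully chosen vertices on $C$.

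To invoke the hypothesis I would set $x=u$, pick $y\in V(C)$ nonadjacent to $u$ (which must exist, for otherwise $u$ inserts easily into $C$), and choose $z\in V(C)$ so that either $xz\notin A$ or $zx\notin A$. Upper bounds of the form $d(u)\le c+2(n-c-1)$, a ceiling on $d(y)$ reflecting the forbidden adjacencies coming from the insertion lemma, and an analogous bound on $d^-(z)$ or $d^+(z)$, should, when combined with $d(x)+d(y)+d^+(x)+d^-(z)\ge 3n-2$ (or its symmetric counterpart), force $c\ge n$ and yield the desired contradiction.

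The main obstacle is the coupled selection of $y$ and $z$: one needs $z$ to satisfy the correct nonadjacency with $x$ \emph{and} to lie in a segment of $C$ where the insertion restrictions trim $d^-(z)$ (respectively $d^+(z)$) significantly, and similarly $y$ must be chosen so that the degree sum actually bites. Dealing with the two symmetric subcases ($xz\notin A$ versus $zx\notin A$), and with the degenerate situation in which $u$ is adjacent to every vertex of $C$ so that $y$ must be taken outside $C$, are the delicate branch points where the argument will naturally split into cases.
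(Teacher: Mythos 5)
This statement is Theorem~4, which the paper quotes from Manoussakis's 1992 article and does not prove; so there is no in-paper proof to compare against, only the surrounding machinery (the $C$-path definition, Lemmas 1--3) that is borrowed from that same source. Your outline is in the right family of arguments --- longest cycle, the insertion obstruction $d_C^+(u)+d_C^-(u)\le c$, and a degree-sum contradiction --- but as written it has genuine gaps rather than being a proof. First, the entire quantitative core is missing: you state that bounds of the form $d(u)\le 2n-c-2$ ``should'' combine with the hypothesis to force $c\ge n$, but with only the trivial ceilings $d(y)\le 2n-2$, $d^+(x),d^-(z)\le n-1$ the sum can comfortably exceed $3n-2$, so no contradiction follows without the finer interlacing bounds you only gesture at. The step you defer (``the coupled selection of $y$ and $z$'') is precisely the theorem's difficulty, not a technicality.

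Second, your choice of the nonadjacent pair is the wrong one. Setting $x=u$ and hunting for $y\in V(C)$ nonadjacent to $u$ is not justified: if $u$ is adjacent (in one direction) to every vertex of $C$, it does not follow that $u$ ``inserts easily'' --- that only forces $d_C(u)=c$ with each adjacency one-sided, which is consistent with $C$ being longest. The pair that actually works is supplied by the $C$-path structure: take $P=x_py_1\cdots y_tx_{p+\lambda}$ with $\lambda$ minimal; then any $y_i$ and any non-insertable vertex $s$ of the skipped segment $C(x_p,x_{p+\lambda})$ are \emph{guaranteed} nonadjacent by minimality of $\lambda$, and Lemma~2 of the present paper gives $d(y_i)+d(s)\le 2n-2$, which is the kind of sharp inequality one must then augment with bounds on $d^+(x)$ and $d^-(z)$ for a suitably placed $z$ to beat $3n-2$. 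Without identifying that forced nonadjacent pair and carrying out the resulting count (including the semicomplete case where no nonadjacent pair exists at all, which needs Camion-type arguments), the proposal does not establish the theorem.
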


Manoussakis \cite{Manoussakis} proposed the following conjecture. If this conjecture is true, then it can be seen as an extension of Theorem 4.

\begin{conjecture}[\rm Manoussakis \cite{Manoussakis}]
Let $D$ be a strongly 2-connected digraph such that for all distinct pairs of nonadjacent vertices $x,y$ and $w,z$ we have $d(x)+d(y)+d(w)+d(z)\geq 4n-3$. Then $D$ is hamiltonian.
\end{conjecture}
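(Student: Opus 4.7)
The plan is to proceed by contradiction, adapting the Bondy--Thomassen technique used in the standard proof of Meyniel's theorem (Theorem 3) from a single nonadjacent pair to two vertex-disjoint nonadjacent pairs.

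First, as a stepping stone I would establish the weaker statement that $D$ has a Hamilton path under the stated hypotheses (this is the Hamilton-path result that the rest of the note will prove). Let $P = v_1 v_2 \cdots v_n$ be such a path; if $v_n v_1 \in A(D)$ a Hamilton cycle is produced immediately, so assume $v_n v_1 \notin A(D)$.

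Next, consider a longest cycle $C$ in $D$ and suppose, for contradiction, $|C| = c < n$. Set $H = V(D) \setminus V(C) \neq \emptyset$. Strong $2$-connectivity yields two internally disjoint paths from $V(C)$ to some fixed $u \in H$ and two more from $u$ back to $V(C)$, producing distinct entry-points $x_1, x_2 \in V(C)$ and exit-points $y_1, y_2 \in V(C)$.

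The core step is to apply the Bondy--Thomassen positional analysis twice: the maximality of $C$ forbids certain arcs between the cyclic successors $x_i^+$ and the vertex $u$ (otherwise one could reroute $C$ through $u$ to obtain a longer cycle), which in the standard situation both forces nonadjacency of $\{x_i^+,u\}$ and yields an upper bound
\[
d(x_i^+) + d(u) \le 2n - 2 \qquad (i = 1,2).
\]
To obtain two vertex-disjoint nonadjacent pairs, I would replace $u$ in one of these inequalities by a second exterior vertex $u' \in H$ (obtained from a further application of strong $2$-connectivity when $|H| \ge 2$) or by a suitable second vertex located on $C$. Summing the two resulting degree-sum bounds gives a total of at most $4n-4$, contradicting the hypothesis $d(x_1^+)+d(u)+d(x_2^+)+d(u') \ge 4n-3$.

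The main obstacle is the case $|H| = 1$: with a single exterior vertex $u$ there is no second outside vertex to pair with $x_2^+$, and a substitute pair $\{x_2^+, w\}$ with $w \in V(C)$ must be produced together with a proof that $\{x_2^+, w\}$ is nonadjacent and still satisfies a $2n-2$ upper bound. Verifying vertex-disjointness of the two pairs, nonadjacency of the second pair, and the sharpness of both bounds when $|H|=1$ is where the genuine difficulty lies; I expect the bulk of any complete proof to be devoted to that case, which is the reason the present note settles only the Hamilton-path conclusion toward the conjecture.
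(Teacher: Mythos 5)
The statement you were given is Manoussakis's conjecture, which this note does \emph{not} prove: the paper presents it explicitly as open and establishes only the weaker Theorem 5 (every longest cycle has length at least $n-1$, hence a Hamilton path exists). Your proposal likewise does not constitute a proof, and you concede as much in your final paragraph. The concrete gap is exactly the terminal case of the paper's argument: after the insertion machinery (Lemmas 1--3 and Claims 1--5 of the paper) one is reduced to a longest cycle $C$ with $|V(D)\setminus V(C)|=1$, a single exterior vertex $y_1$, and a single non-insertable vertex $s$, giving one nonadjacent pair with $d(y_1)+d(s)\le 2n-2$. At that point no second \emph{distinct} nonadjacent pair with a matching $2n-2$ bound is available, so the hypothesis $d(x)+d(y)+d(w)+d(z)\ge 4n-3$ cannot be invoked to reach a contradiction. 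Your sketch stops at precisely this wall (your ``$|H|=1$'' case), so nothing beyond the paper's partial result is actually obtained.

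There are also technical problems upstream of that. The asserted bound $d(x_i^+)+d(u)\le 2n-2$ does not follow merely from $x_i^+$ being the successor of an entry point: in the Bondy--Thomassen/Manoussakis argument the vertex paired with the exterior vertex must be one lying strictly between the two endpoints of a $C$-path that \emph{cannot be inserted} into the complementary segment of $C$ (this is what Lemma 1 and Lemma 2 of the paper supply), and both its nonadjacency to $u$ and the degree bound are consequences of the minimality condition in the definition of a $C$-path, none of which your outline sets up. Moreover, your concern about producing two \emph{vertex-disjoint} pairs is misplaced: the hypothesis only requires the two pairs to be distinct as pairs, so they may share a vertex (this is the content of the paper's Claim 2, which yields $2d(x)+d(y)+d(z)\ge 4n-3$ and is what kills the cases $|S|\ge 2$ and $t\ge 2$). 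Finally, the opening move --- first proving a Hamilton path $v_1\cdots v_n$ and considering whether $v_nv_1\in A$ --- is never used in the rest of the argument and circularly presupposes the very result the note is written to establish.
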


Manoussakis \cite{Manoussakis} gave an example to show that Conjecture 1 is almost best. Here we gave another example. Let $D$ be an associated digraph of $K_{\frac{n-1}{2},\frac{n+1}{2}}$, where $n\geq 9$ is odd. Let $X,Y$ be two parts of $D$ such that $|X|=\frac{n-1}{2},|Y|=\frac{n+1}{2}$. Then the degree sum of any four vertices in $X$ is $4(n+1)$ and the degree sum of any four vertices in $Y$ is $4(n-1)$. Furthermore, we can see the degree sum of all distinct pairs of nonadjacent vertices in $D$ is at least $4n-4$ and $D$ is not hamiltonian.

To our knowledge, there are no further references on this conjecture. In this note we prove the following result, and it may be a first step towards confirming Conjecture 1.

\begin{theorem}
Let $D$ be a strongly 2-connected digraph such that for all distinct pairs of nonadjacent vertices $x,y$ and $w,z$ we have $d(x)+d(y)+d(w)+d(z)\geq 4n-3$. Then $D$ has a longest cycle of length at least $n-1$.
\end{theorem}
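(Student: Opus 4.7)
The plan is to argue by contradiction. Assume the longest cycle $C = v_0 v_1 \cdots v_{c-1} v_0$ of $D$ has length $c \leq n-2$, and set $R = V(D) \setminus V(C)$, so $|R| \geq 2$. A Hamilton path will follow from the conclusion $c \geq n-1$ at once: if exactly one vertex $u$ lies outside $C$, then by strong connectivity $u$ has an out-neighbor $v_j \in V(C)$, and $u, v_j, v_{j+1}, \ldots, v_{j-1}$ visits every vertex of $D$.

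\textbf{Step 1 (Two outside ears).} Strong $2$-connectivity together with Menger's theorem provides two internally disjoint paths from $V(C)$ into $R$ and two from $R$ back to $V(C)$. Splicing these through the digraph induced on $R$ yields two internally disjoint ``ears'' $E_1, E_2$, each starting at some $v_i$ (resp.\ $v_k$) on $V(C)$, passing through a vertex $h_1 \in R$ (resp.\ $h_2 \in R$), and ending at some $v_j$ (resp.\ $v_\ell$) on $V(C)$. A short preliminary argument will justify that the four endpoints $v_i, v_j, v_k, v_\ell$ are distinct; exceptional configurations will be deferred to Step 3.

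\textbf{Step 2 (Forbidden adjacencies).} Each ear produces Bondy--Thomassen style cycle-maximality restrictions. If one attempts to replace the $C$-segment from $v_i$ to $v_j$ by $E_1$, the resulting closed walk must not exceed $c$ in length; this forbids the arcs $h_1 v_{i+1}$ and $v_{j-1} h_1$, and also forbids many ``cross chords'' between translates of $v_{i+1}$ and $v_{j-1}$ around $C$. An analogous family of missing arcs is extracted from $E_2$ at $h_2, v_{k+1}, v_{\ell-1}$.

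\textbf{Step 3 (Degree-sum contradiction).} I will choose four vertices $x,y,w,z$ from $\{h_1, h_2, v_{i+1}, v_{j-1}, v_{k+1}, v_{\ell-1}\}$ so that $\{x,y\}$ and $\{w,z\}$ are two distinct nonadjacent pairs (the nonadjacencies are direct consequences of Step 2). Charging the forbidden arcs to individual vertices will give an upper bound on each of $d(x), d(y), d(w), d(z)$, and the total should come out strictly less than $4n-3$, contradicting the hypothesis. The main obstacle is precisely here: the bound $4n-3$ is essentially tight, so every forbidden arc must be assigned to a unique endpoint with no double counting, and adjacencies among the chosen four vertices must be tracked carefully. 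The most delicate sub-cases arise when the two ears share an endpoint on $C$, when $|R| = 2$, or when $h_1 h_2$ is an arc; in each such case I plan either to invoke Theorem~4 for an appropriate triple of vertices or to conduct an ad hoc analysis of the constrained neighborhood structure.
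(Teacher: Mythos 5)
Your outline points in the right general direction (longest cycle, insertion/maximality restrictions, degree-sum contradiction), but as written it has two genuine gaps, and the second one is exactly where all the work in this theorem lives. First, the nonadjacencies you need in Step 3 do not follow from the forbidden arcs of Step 2. Cycle maximality only kills \emph{one direction} of each arc: from $v_i h_1, h_1 v_j \in A$ you may conclude $h_1 v_{i+1} \notin A$ and $v_{j-1} h_1 \notin A$, but nothing stops $v_{i+1} h_1$ or $h_1 v_{j-1}$ from being arcs, so $\{h_1, v_{i+1}\}$ need not be a nonadjacent pair and the hypothesis of the theorem cannot be applied to it. The paper circumvents this by choosing the $C$-path with $\lambda$ minimal (no shorter chord of the same kind exists over the spanned segment), which forces the internal vertices $y_i$ of the path to be \emph{genuinely} nonadjacent to every vertex strictly between the two attachment points; that minimality is also what drives the key degree bound $d(y_i)+d(s) \le 2n-2$ (Lemma 2), since it rules out paths $y_i w s$ and $s w y_i$ through $D - C$. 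Without some analogue of this minimal choice, your charging scheme in Step 3 cannot produce two admissible nonadjacent pairs.

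Second, the existence of two internally disjoint ears with four distinct endpoints and distinct intermediate vertices $h_1 \ne h_2$ is asserted in Step 1 and never established, and it is false in the critical configuration: strong $2$-connectivity gives two in-arcs and two out-arcs between $C$ and a component $H$ of $D-C$, but both ears may be forced through the same vertex of $H$, and $H$ may be the only component outside $C$. In that situation there is no second pair in sight, and producing one is the entire difficulty. The paper's proof spends most of its length here (Claims 3--5): it first shows the $C$-path has a single internal vertex $y_1$ and the non-insertable set $S$ is a singleton (using pairs sharing a common vertex, for which the hypothesis still applies), then shows $D-C$ has no second component, and finally --- the hardest step --- analyzes $D' = D - y_1$ to either extract a second $C$-path or bound $d(y)+d(x_i^-)$ for a vertex $y \in H \setminus \{y_1\}$, with a delicate case analysis on how $y_1$ attaches to the candidate insertion vertices. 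Your Step 3 explicitly defers all of this ("exceptional configurations", "ad hoc analysis"), so the proposal as it stands is a plan for the easy half of the argument and an acknowledgment of the hard half rather than a proof.
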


The following result is a direct corollary.

\begin{corollary}
Let $D$ be a strongly 2-connected digraph such that for all distinct pairs of nonadjacent vertices $x,y$ and $w,z$ we have $d(x)+d(y)+d(w)+d(z)\geq 4n-3$. Then $D$ has a Hamilton path.
\end{corollary}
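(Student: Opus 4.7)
The plan is to derive Corollary 1 directly from Theorem 5 via a short case analysis, without invoking any deeper structural arguments. Applying Theorem 5 to $D$ yields a cycle $C$ of length at least $n-1$. If $C$ has length $n$, then $C$ itself is a Hamilton cycle, and deleting any single arc of $C$ produces a Hamilton path; there is nothing more to say.

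So suppose $C$ has length exactly $n-1$, meaning precisely one vertex $v$ lies outside $C$. Write $C = u_1 u_2 \cdots u_{n-1} u_1$. Since $D$ is strongly $2$-connected it is in particular strongly connected, so $v$ has at least one out-neighbour in $D$, and that out-neighbour must lie on $C$ (as $v$ is the only vertex not on $C$). Fix such a neighbour $u_i$. Then
$$v \to u_i \to u_{i+1} \to \cdots \to u_{n-1} \to u_1 \to \cdots \to u_{i-1}$$
is a directed path visiting every vertex of $D$ exactly once, which is the required Hamilton path. (Symmetrically one could pick an in-neighbour of $v$ on $C$ and attach $v$ at the end, but one direction is enough.)

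Since this argument is pure bookkeeping once Theorem 5 is available, there is no substantial obstacle in deducing Corollary 1: the degree hypothesis is used only to feed Theorem 5, and the only piece of strong $2$-connectivity the corollary itself calls on is plain strong connectivity, which provides the arc from $v$ into $C$ used above. All of the genuine work has already been done inside the proof of Theorem 5; the corollary is essentially a one-line consequence.
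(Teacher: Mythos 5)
Your argument is correct and is exactly the ``direct corollary'' deduction the paper intends but does not spell out: a cycle of length $n-1$ plus the single remaining vertex $v$, attached via an arc from $v$ into $C$ guaranteed by strong connectivity, yields a Hamilton path. Nothing further is needed.
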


\section{Terminology and notation}
In this section, we will give necessary notation and terminology. Throughout this note, we use $D$ to denote a digraph (directed graph), and $V(D)$ and $A(D)$ to denote the \emph{vertex set} and \emph{arc set} of $D$, respectively. When there is no danger of ambiguity, we use $V$ and $A$ instead of $V(D)$ and $A(D)$, respectively. For an arc $xy\in A$, $x$ is always referred to as the \emph{origin}, and $y$, as the \emph{terminus}. Throughout this note, simple digraphs are just considered, that is, digraphs with no two arcs with the same origin and terminus, and no loops (an arc with the same vertex as the origin and terminus meantime).

We say that $D$ is \emph{strongly $k$-connected} if for any ordered pair of vertices $\{u,v\}$, there are $k$ internally disjoint directed paths from $u$ to $v$. For two vertices $u,v\in V$, we say that $u$ \emph{dominates} (is \emph{dominated} by) $v$ if there is an arc $uv\in A$ ($vu\in A$), and $u,v$ are called a pair of \emph{nonadjacent vertices} if $uv\notin A$ and $vu\notin A$. For a vertex $v$ and a subdigraph $H$ of $D$, the \emph{out-neighbor set} (\emph{in-neighbor set}) of $v$ in $H$, denoted by $N^+_{H}(v)$ ($N^-_{H}(v)$), is the set of those vertices in $H$ dominated by (dominating) $v$. The \emph{out-degree} (\emph{in-degree}, \emph{degree}) of $v$ in $H$, denoted by $d^+_{H}(v)$ ($d^-_{H}(v)$, $d_H(v)$), equals $|N^+_{H}(v)| (|N^-_{H}(v)|, |N^+_{H}(v)|+|N^-_{H}(v)|)$. If there is no danger of ambiguity, then we use $d^+(v)$, $d^-(v)$ and $d(v)$ instead of $d^+_{D}(v)$, $d^-_{D}(v)$ and $d_{D}(v)$, respectively. We use $D-H$ to denote the subdigraph of $D$ induced by the vertex set $V(D)\backslash V(H)$.

A digraph $D$ on $n$ vertices is called \emph{hamiltonian} if there is a directed cycle of length $n$, and called \emph{pancyclic} if there are directed cycles with lengths from $2$ to $n$. Let $C$ be a directed cycle in $D$ with a given orientation. Let $u\in V(C)$. We use $u^-$ and $u^+$ to denote the \emph{predecessor} and \emph{successor} of $u$ along the orientation of $C$, respectively. For two vertices $u,v\in V(C)$, we use $C[u,v]$ to denote the segment from $u$ to $v$ along the orientation of $C$, and let $C(u,v)=C[u^+,v^-]$.

We also use some terminology and notation from \cite{Berman_Liu,Manoussakis}. Let $P=v_1v_2\cdots v_p$ be a path and $u$ be a vertex not on $P$. If there are two vertices $v_m$ and $v_{m+1}$ (where $m,m+1\in \{1,2,\ldots,p\}$) such that $v_mu\in A$ and $uv_{m+1}\in A$, then $P$ can be extended to include $u$ by replacing the arc $v_mv_{m+1}$ by the path $v_muv_{m+1}$. In this case, following \cite{Berman_Liu}, we say that \emph{$u$ can be inserted into $P$}. Let $D$ be a non-hamiltonian digraph on $n$ vertices and $C=x_1x_2\ldots x_kx_1$ be a longest cycle in $D$. Following \cite{Manoussakis}, we define a $C$-path of $D$ (with respect to a component $H$ of $D-C$) to be a path $P=x_py_1y_2\ldots y_tx_{p+\lambda}$, where $t\geq 1$, $x_p,x_{p+\lambda}$ are two distinct vertices of $C$, $\{y_1,\ldots,y_t\}\subset V(H)$, and $\lambda$ is chosen as the minimal one, that is, there is no path $P'=x_{p'}y'_1y'_2\ldots y'_{t'}x_{p'+\lambda'}$ such that $\lambda'\geq 1$, $0<\lambda'<\lambda$, $\{x_{p'},x_{p'+\lambda'}\}\subset \{x_p,x_{p+1},\ldots,x_{p+\lambda}\}$ (the subscripts of all the $x_i$'s are taken modulo $k$), where $\{y'_1,y'_2,\ldots, y'_{t'}\}\subset V(H)$.

\section{Proof of Theorem 5}
The following three lemmas are useful for our proof. The second lemma is a refinement of Lemma 2.3 in \cite{Manoussakis}.
\begin{lemma}[Bondy and Thomassen \cite{Bondy_Thomassen}]
Let $D$ be a digraph, $P$ be a directed path of $D$ and $v\in V(D)\backslash V(P)$. If $v$ can not be inserted into $P$, then
$d_P(v)\leq |P|+1$.
\end{lemma}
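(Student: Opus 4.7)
The plan is to exploit the non-insertion hypothesis by encoding in-neighbors and out-neighbors of $v$ as index subsets and showing that a shift makes them disjoint. Write $P = v_1 v_2 \cdots v_p$, so $|P| = p$, and set
\[
I^+ = \{i \in \{1,\ldots,p\} : v v_i \in A\}, \qquad I^- = \{i \in \{1,\ldots,p\} : v_i v \in A\},
\]
so that $d_P(v) = |I^+| + |I^-|$.

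Next I would form the shifted index set $J = \{i+1 : i \in I^-\} \subseteq \{2,\ldots,p+1\}$. The non-insertion hypothesis translates directly into a disjointness statement: if some $k$ lay in both $I^+$ and $J$, then $k-1 \in I^-$ and $k \in I^+$ with $1 \leq k-1 < k \leq p$, meaning $v_{k-1} v \in A$ and $v v_k \in A$, so $v$ could be inserted into $P$ between $v_{k-1}$ and $v_k$, contradicting the assumption. Hence $I^+ \cap J = \emptyset$.

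Finally, both $I^+$ and $J$ are contained in $\{1,2,\ldots,p+1\}$, a set of size $p+1$, so
\[
d_P(v) = |I^+| + |I^-| = |I^+| + |J| \leq p+1 = |P|+1,
\]
which is the desired bound. The argument is very short and I do not anticipate a real obstacle; the only point requiring care is tracking the endpoints of the shift (namely $1 \notin J$ because $0 \notin I^-$, and $p+1 \notin I^+$) so that the containment $I^+ \cup J \subseteq \{1,\ldots,p+1\}$ is genuinely tight and the pigeonhole step is valid.
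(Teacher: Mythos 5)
Your proof is correct: the shift $J=\{i+1: i\in I^-\}$ is injective, the non-insertion hypothesis gives $I^+\cap J=\emptyset$ exactly as you argue, and $I^+\cup J\subseteq\{1,\dots,p+1\}$ yields $d_P(v)=|I^+|+|J|\le p+1$. The paper only cites this lemma from Bondy and Thomassen without reproducing a proof, and your argument is precisely the standard one used there, so there is nothing to add.
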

\begin{lemma}
Let $D$ be a non-hamiltonian digraph on $n$ vertices, $C=x_1x_2\ldots x_k$ be a longest cycle of $D$, $P=x_py_1y_2\ldots y_tx_{p+\lambda}$ be a $C$-path of $D$ (with respect to a component $H$ of $D-C$), $R=\{x_{p+1},x_{p+2},\ldots,x_{p+\lambda-1}\}$, and $S=\{v:v\in R,v~can~not~be~inserted~into~C[x_{p+\lambda},x_p]\}$. Then for any $y_i$, $i\in \{1,2,\ldots,t\}$, $s\in S$, $d(y_i)+d(s)\leq 2n-2$.
\end{lemma}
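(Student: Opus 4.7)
The plan is to decompose $V(D)\setminus\{y_i,s\}$ into four disjoint classes and bound the contribution of each to $d(y_i)+d(s)$ separately. The three structural ingredients will be the minimality of $\lambda$, the non-insertability of both $y_i$ and $s$ into $C[x_{p+\lambda},x_p]$, and the fact that $y_i\in V(H)$ has no arcs to vertices lying in components of $D-C$ other than $H$.

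First I would establish a uniform non-adjacency: no $y_j$ with $1\leq j\leq t$ has any arc with a vertex of $R$. Indeed, if $y_j x_{p+m}\in A$ for some $0<m<\lambda$, then $x_p y_1 y_2\cdots y_j x_{p+m}$ would be a $C$-path (with respect to $H$) of gap $m<\lambda$, violating the minimality of $\lambda$; the case $x_{p+m}y_j\in A$ follows symmetrically from the path $x_{p+m}y_j y_{j+1}\cdots y_t x_{p+\lambda}$. In particular $y_i$ and $s$ are nonadjacent (so $d(y_i)+d(s)$ counts no arc twice) and $y_i$ has no arc to or from $R\setminus\{s\}$.

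The key step is a \emph{two-per-vertex} bound on $V(H)\setminus\{y_i\}$: for every such $v$, at most two of the four potential arcs $sv,vs,y_iv,vy_i$ lie in $A(D)$. For $v=y_j$ with $j\neq i$ this is immediate by the previous step, since $v$ has no arc with $s$. For $v\in V(H)\setminus\{y_1,\dots,y_t\}$ I argue by contradiction: writing $s=x_{p+j}$, if both $sv$ and $vy_i$ lay in $A$ then $s\,v\,y_i y_{i+1}\cdots y_t x_{p+\lambda}$ would be a $C$-path with internal vertices in $V(H)$ and gap $\lambda-j<\lambda$; and if both $y_iv$ and $vs$ lay in $A$ then $x_p y_1\cdots y_i\,v\,s$ would analogously be a $C$-path of gap $j<\lambda$. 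Both alternatives contradict the minimality of $\lambda$, and the detours are genuine simple paths since $v\notin\{y_1,\dots,y_t\}$. Thus this class contributes at most $2(h-1)$ to $d(y_i)+d(s)$, where $h=|V(H)|$.

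The remaining three classes are routine. The path $C[x_{p+\lambda},x_p]$ has $k-\lambda+1$ vertices; $y_i$ cannot be inserted into it (otherwise $C$ would extend to a cycle of length $k+1$), and $s$ cannot by hypothesis, so Lemma~1 gives $d_{C[x_{p+\lambda},x_p]}(y_i),\,d_{C[x_{p+\lambda},x_p]}(s)\leq k-\lambda+2$. The class $R\setminus\{s\}$ has size $\lambda-2$ and the class $V(D)\setminus(V(C)\cup V(H))$ has size $n-k-h$; since $y_i$ has no arc with either set (the former by Step~1, the latter because $H$ is a component of $D-C$), each contributes only through $s$, bounded by $2(\lambda-2)$ and $2(n-k-h)$ respectively. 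Summing the four bounds,
\[
d(y_i)+d(s)\leq 2(h-1)+2(k-\lambda+2)+2(\lambda-2)+2(n-k-h)=2n-2,
\]
as desired. The main obstacle in this plan is the two-per-vertex bound for $V(H)\setminus\{y_i\}$: one must recognize that a vertex $v$ outside $\{y_1,\dots,y_t\}$ can be spliced into $P$ in either direction to create a $C$-path of strictly smaller gap whenever two "wrong" arcs to $\{y_i,s\}$ coexist at $v$.
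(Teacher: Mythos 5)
Your proof is correct and takes essentially the same route as the paper's: the same decomposition of $V(D)$ into the segment $C[x_{p+\lambda},x_p]$ (handled via Lemma 1), the gap $R$, and the off-cycle vertices (handled via the forbidden two-arc detours $y_i\to v\to s$ and $s\to v\to y_i$ forced by the minimality of $\lambda$), with identical arithmetic. The only cosmetic difference is that you split $D-C$ into the component $H$ and the remainder, while the paper treats $D-C$ as a single block.
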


\begin{proof}
Since $C$ is longest in $D$, $y_i$ can not be inserted into $C[x_{p+\lambda},x_p]$. By Lemma 1,
\begin{align}
d_{C[x_{p+\lambda},x_p]}(y_i)\leq |C[x_{p+\lambda},x_p]|+1.
\end{align}
Since $s$ can not be inserted into $C[x_{p+\lambda},x_p]$, by Lemma 1,
\begin{align}
d_{C[x_{p+\lambda},x_p]}(s)\leq |C[x_{p+\lambda},x_p]|+1.
\end{align}
Since $P$ is a $C$-path of $D$, $y_i$ is nonadjacent to any vertex of $C[x_{p+1},x_{p+\lambda-1}]$. It follows that
\begin{align}
d_{C[x_{p+1},x_{p+\lambda-1}]}(y_i)=0.
\end{align}
Furthermore, we have
\begin{align}
d_{C[x_{p+1},x_{p+\lambda-1}]}(s)\leq 2(|C[x_{p+1},x_{p+\lambda-1}]|-1).
\end{align}
Let $H=D-C$. Moreover, $D$ has neither a directed path $y_iws$ nor a directed path $swy_i$, where $w\in V(H)\backslash \{y_i\}$,
since otherwise there is a $C$-path either from $x_p$ to $s$ or from $s$ to $x_{p+\lambda}$, and it contradicts the minimality of $\lambda$. This implies that
\begin{align}
d_{H}(y_i)+d_{H}(s)\leq 2(|H|-1).
\end{align}
By adding the inequalities (1)-(5), we have that
$d(y_i)+d(s)=d_{C[x_{p+\lambda},x_p]}(y_i)+d_{C[x_{p+\lambda},x_p]}(s)+d_{C[x_{p+1},x_{p+\lambda-1}]}(y_i)+d_{C[x_{p+1},x_{p+\lambda-1}]}(s)+d_{H}(y_i)+d_{H}(s)
\leq |C[x_{p+\lambda},x_p]|+1+|C[x_{p+\lambda},x_p]|+1+2(|C[x_{p+1},x_{p+\lambda-1}]|-1)+2(|H|-1)=2n-2$.

The proof is complete.
\end{proof}
\begin{lemma}[Berman and Liu \cite{Berman_Liu}]
Let $P$ and $Q$ be two (vertex) disjoint paths and $K$ be a subset of $V(P)$. If every vertex $z$ in $K$ can be inserted into $Q$,
then there exists a path $Q'$ with the same endpoints as $Q$ such that $V(Q)\subset V(Q')\subset V(Q)\cup V(P)$ and $Q'$ contains all
vertices of $K$.
\end{lemma}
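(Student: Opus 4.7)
The plan is to construct $Q'$ directly, exploiting the path structure of $P$. Enumerate $K$ in the order it appears along $P$ as $p_{i_1}, p_{i_2}, \ldots, p_{i_k}$ with $i_1 < i_2 < \cdots < i_k$, and let $V_t$ denote the set of valid insertion positions for $p_{i_t}$ in $Q$ (indices $j$ with $q_j p_{i_t}, p_{i_t} q_{j+1} \in A$); the hypothesis gives $V_t \neq \emptyset$ for every $t$.

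The key observation is that if two vertices $p_{i_t}$ and $p_{i_{t'}}$ with $t<t'$ share a valid insertion position $j$, then the directed sub-path of $P$ from $p_{i_t}$ to $p_{i_{t'}}$, together with the arcs $q_j p_{i_t}$ and $p_{i_{t'}} q_{j+1}$, inserts both at position $j$ simultaneously, possibly passing through vertices of $V(P)\setminus K$ along the way (permitted since $V(Q') \subset V(Q) \cup V(P)$). The plan is therefore to partition $\{1,\ldots,k\}$ into contiguous intervals $I_l = [a_l, b_l]$ and assign each a distinct insertion position $j_l \in V_{a_l} \cap V_{b_l}$. Then $Q'$ is built by traversing $Q$ and, at each position $j_l$, replacing the arc $q_{j_l} q_{j_l+1}$ with the sub-path $q_{j_l}\,p_{i_{a_l}} p_{i_{a_l}+1} \cdots p_{i_{b_l}}\,q_{j_l+1}$. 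Because the intervals are disjoint contiguous blocks and the $i$-indices are strictly increasing, the corresponding sub-paths of $P$ are pairwise disjoint, so $Q'$ is a simple directed path with the same endpoints as $Q$, satisfies $V(Q) \subset V(Q') \subset V(Q) \cup V(P)$, and contains all of $K$.

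The main step requiring care is the construction of this partition and assignment, which I would do greedily: at step $l$, starting at $a_l$, let $b_l \geq a_l$ be the largest index for which $V_{a_l} \cap V_{b_l} \setminus \{j_1, \ldots, j_{l-1}\}$ is non-empty, and pick $j_l$ from that set. The main obstacle is to verify this greedy never gets stuck, i.e., $V_{a_l} \setminus \{j_1, \ldots, j_{l-1}\} \neq \emptyset$ at every step. I expect a short argument by contradiction: if some $j \in V_{a_l}$ equals an earlier $j_m$, then $j \in V_{a_m} \cap V_{a_l} \setminus \{j_1, \ldots, j_{m-1}\}$, so the maximality of $b_m$ in the greedy choice forces $b_m \geq a_l$; but then $a_{m+1} = b_m + 1 > a_l$, contradicting $a_{m+1} \leq a_l$ (which holds since $m < l$ and the $a_l$'s are strictly increasing in $l$). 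Nailing down this contradiction and the verification that the resulting $Q'$ is simple are the places where one must be careful.
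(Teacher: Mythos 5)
The paper does not prove this statement: it is Lemma 3, imported verbatim from Berman and Liu \cite{Berman_Liu} and used as a black box, so there is no internal proof to compare against. Judged on its own, your argument is correct and self-contained. The two load-bearing points both check out. First, the block-insertion observation is sound: for $t<t'$ a common position $j\in V_t\cap V_{t'}$ supplies exactly the two arcs needed, $q_jp_{i_t}$ (from $j\in V_t$) and $p_{i_{t'}}q_{j+1}$ (from $j\in V_{t'}$), and the intermediate vertices of $P$ are simply carried along, which the conclusion $V(Q')\subset V(Q)\cup V(P)$ permits; nothing is required of the intermediate members of $K$ in the block, so it does not matter that the candidate set for $b_l$ need not be an interval. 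Second, the greedy never stalls: if $j_m\in V_{a_l}$ for some $m<l$, then $j_m\in (V_{a_m}\cap V_{a_l})\setminus\{j_1,\dots,j_{m-1}\}$, so $a_l$ was a legal candidate for $b_m$ and maximality forces $b_m\geq a_l$, contradicting $b_m<a_{m+1}\leq a_l$; hence $V_{a_l}\cap\{j_1,\dots,j_{l-1}\}=\emptyset$ and $V_{a_l}\neq\emptyset$ by hypothesis. Distinctness of the $j_l$ and disjointness of the $P$-segments (their index ranges $[i_{a_l},i_{b_l}]$ are separated since $i_{b_l}<i_{a_{l+1}}$) then give that $Q'$ is a simple directed path with the endpoints of $Q$ containing all of $K$. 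This is essentially the standard insertion argument in the spirit of Berman--Liu and Bondy--Thomassen; the only cosmetic caveat is the ambiguous subscript $p_{i_{a_l}+1}$ in your displayed path, which should clearly denote the successor on $P$ rather than the next element of $K$.
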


\noindent{}
{\bf Proof of Theorem 5.} Suppose that $D$ is not hamiltonian. Let $C=x_1x_2\ldots x_k$ be a longest cycle in $D$ with a given orientation. Since $D$ is not hamiltonian, $k\leq n-1$ and $V(D)\backslash V(C)\neq \emptyset$. Let $H$ be a component of $D-C$ and $R=D-C-H$. Since $D$ is strongly 2-connected, there are at least two in-neighbors and two out-neighbors of $H$ in $C$. Thus there is a $C$-path (with respect to $H$), denote by $P=x_py_1y_2\ldots y_tx_{p+\lambda}$, where $t\geq 1$ and $x_p,x_{p+\lambda}\in V(C)$. Let $S=\{v:v\in C[x_{p+1},x_{p+\lambda-1}], v~can~not~be~inserted~into~C[x_{p+\lambda},x_p]\}$. Since $C$ is longest, $S\neq \emptyset$. Let $s$ be an arbitrary vertex of $S$.
By Lemma 2, we have
\begin{claim}
$d(y_i)+d(s)\leq 2n-2$ for $i\in \{1,2,\ldots,t\}$.
\end{claim}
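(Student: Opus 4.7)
The plan is straightforward: Claim 1 is a direct specialization of Lemma 2 to the setup just established in the proof of Theorem 5. First I would verify that every hypothesis of Lemma 2 is in force at this point in the argument. We have assumed $D$ is non-hamiltonian, $C=x_1x_2\cdots x_k$ is a longest cycle, $P=x_py_1\cdots y_tx_{p+\lambda}$ is a $C$-path with respect to the component $H$ of $D-C$, and $S$ is defined by exactly the condition used in the statement of Lemma 2 (the set of vertices of $C[x_{p+1},x_{p+\lambda-1}]$ that cannot be inserted into $C[x_{p+\lambda},x_p]$). So Lemma 2 applies verbatim and yields $d(y_i)+d(s)\le 2n-2$ for every $y_i$ on $P$ and every $s\in S$, which is precisely the claim.

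The one auxiliary fact that I would record, to make sure the quantification is not vacuous, is that $S\neq\emptyset$. If $S$ were empty, then every vertex of $C[x_{p+1},x_{p+\lambda-1}]$ could be inserted into $C[x_{p+\lambda},x_p]$. Applying Lemma 3 (Berman--Liu) with $Q=C[x_{p+\lambda},x_p]$, the role of $P$ played by $C[x_{p+1},x_{p+\lambda-1}]$, and $K=V(C[x_{p+1},x_{p+\lambda-1}])$, one obtains a directed path $Q'$ from $x_{p+\lambda}$ to $x_p$ containing all of $V(C)$. Concatenating $Q'$ with the $C$-path $x_py_1\cdots y_tx_{p+\lambda}$ then produces a directed cycle of length $k+t>k$, contradicting the maximality of $C$. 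Hence $S\neq\emptyset$, as the paper already observes.

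Since everything reduces to an invocation of Lemma 2, no new obstacle arises for Claim 1 itself: the technical work (controlling the contributions of $y_i$ and $s$ on $C[x_{p+\lambda},x_p]$, on the interior arc $C[x_{p+1},x_{p+\lambda-1}]$, and on the external component $H$) is exactly what Lemma 2 already performs, using Lemma 1 to bound insertions and the minimality of $\lambda$ to rule out the two-arc paths $y_iws$ and $swy_i$ through $H$. In a full write-up I would simply cite Lemma 2 in one line, prefaced by the short observation above that $S\neq\emptyset$.
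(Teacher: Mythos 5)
Your proposal is correct and matches the paper exactly: Claim 1 is obtained by a direct application of Lemma 2 to the $C$-path $P$ and the set $S$ as defined, and the paper likewise disposes of it with the single line ``By Lemma 2.'' Your added justification that $S\neq\emptyset$ via Lemma 3 (Berman--Liu) is the same argument the paper invokes implicitly with ``Since $C$ is longest, $S\neq\emptyset$'' and spells out later in Claim 4.
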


The next claim can be easily deduced from the assumption of Theorem 5.
\begin{claim}
For any triple of distinct vertices $x,y,z$ such that $x,y$ and $x,z$ are two pairs of nonadjacent vertices, $2d(x)+d(y)+d(z)\geq 4n-3$.
\end{claim}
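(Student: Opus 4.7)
The plan is to apply the degree-sum hypothesis of Theorem~5 directly, with the two chosen unordered pairs of nonadjacent vertices sharing the common vertex $x$. The hypothesis asserts that for every two distinct pairs of nonadjacent vertices $\{a,b\}$ and $\{c,d\}$ one has $d(a)+d(b)+d(c)+d(d)\geq 4n-3$. The crucial reading is that ``distinct pairs'' here means distinct as unordered two-element subsets of $V$: the two pairs are \emph{not} required to be vertex-disjoint, they only need to be different as sets.

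Given the triple of distinct vertices $x,y,z$ with $\{x,y\}$ and $\{x,z\}$ both being pairs of nonadjacent vertices, I would take the two pairs in the hypothesis to be $\{x,y\}$ and $\{x,z\}$. Since $y\neq z$ these are indeed distinct unordered pairs, and both consist of nonadjacent vertices, so the hypothesis applies and produces
\[
d(x)+d(y)+d(x)+d(z)\geq 4n-3,
\]
which is precisely $2d(x)+d(y)+d(z)\geq 4n-3$.

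The only conceptual obstacle is justifying that the hypothesis of Theorem~5 is meant to allow the two pairs to overlap in a single vertex. This is the standard interpretation of ``distinct pairs'' in degree-sum hypotheses of this Bondy--Chv\'atal flavour (distinct as unordered pairs, not as disjoint quadruples), and it is consistent with the author's own remark that the claim is an easy consequence of the hypothesis; no further combinatorial argument is required.
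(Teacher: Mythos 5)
Your proof is correct and is exactly the deduction the paper intends: it states only that Claim 2 ``can be easily deduced from the assumption of Theorem 5,'' and the deduction is precisely your application of the hypothesis to the two distinct (but overlapping) nonadjacent pairs $\{x,y\}$ and $\{x,z\}$. Your reading of ``distinct pairs'' as distinct unordered sets rather than disjoint ones is confirmed by how the paper later invokes Claim 2 (e.g.\ against $2d(y_1)+d(s)+d(s')\leq 4n-4$ in Claim 3).
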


\begin{claim}$S=\{s\}$ and $t=1$.
\end{claim}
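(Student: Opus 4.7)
The strategy is to rule out each possibility ($t\geq 2$ and $|S|\geq 2$) separately by pitting the upper bound of Claim~1 against the lower bound of Claim~2 applied to a triple sharing a common nonadjacency. The two inequalities are calibrated so that pairing two copies of Claim~1 gives $4n-4$, which falls just short of the $4n-3$ demanded by Claim~2.

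First I would dispose of the case $t\geq 2$. Here $y_1$ and $y_2$ are distinct vertices of $H$, and the same minimality-of-$\lambda$ observation used in the proof of Lemma~2 shows that every vertex of $C[x_{p+1},x_{p+\lambda-1}]$, in particular the chosen $s$, is nonadjacent to each of $y_1,y_2$. Thus $\{s,y_1\}$ and $\{s,y_2\}$ are two distinct pairs of nonadjacent vertices with common vertex $s$, so Claim~2 gives $2d(s)+d(y_1)+d(y_2)\geq 4n-3$. On the other hand Claim~1 applied with $i=1$ and $i=2$ yields $d(s)+d(y_1)\leq 2n-2$ and $d(s)+d(y_2)\leq 2n-2$; adding these produces $2d(s)+d(y_1)+d(y_2)\leq 4n-4$, a contradiction. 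Hence $t=1$.

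The case $|S|\geq 2$ is handled by the symmetric argument. Pick two distinct $s_1,s_2\in S$. By the $C$-path property, $y_1$ is nonadjacent to both $s_1$ and $s_2$, so Claim~2 (with $x=y_1$, $y=s_1$, $z=s_2$) yields $2d(y_1)+d(s_1)+d(s_2)\geq 4n-3$. Claim~1 gives $d(y_1)+d(s_j)\leq 2n-2$ for $j=1,2$, and summing delivers the opposite inequality $2d(y_1)+d(s_1)+d(s_2)\leq 4n-4$, a contradiction. Therefore $S=\{s\}$.

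I do not expect an obstacle here; the claim is a short arithmetic consequence of Claims~1 and~2, with the only content being the verification of the two nonadjacencies, and these come for free from the minimality of $\lambda$ built into the definition of a $C$-path. The genuine difficulties lie further downstream, where one has to use $t=1$ and $S=\{s\}$ to force a cycle of length at least $n-1$ through detailed analysis of the neighborhoods of $y_1$ and $s$ on $C$.
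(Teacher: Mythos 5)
Your proof is correct and is essentially identical to the paper's: both cases are settled by summing two instances of Claim~1 and contradicting Claim~2, with the required nonadjacencies supplied by the minimality of $\lambda$ in the definition of a $C$-path. The only difference is the order in which the two cases are treated, which is immaterial.
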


\begin{proof}
Assume that $|S|\geq 2$. Let $s,s'\in S$. Then by Claim 1, $d(y_1)+d(s)\leq 2n-2$ and $d(y_1)+d(s')\leq 2n-2$. By the choice of $P$, $y_1,s$ and $y_1,s'$ are two pairs of nonadjacent vertices, and we get $2d(y_1)+d(s)+d(s')\leq 4n-4$. By Claim 2, we get a contradiction. Hence $|S|=1$.

Assume that $t\geq 2$. By Claim 1, $d(s)+d(y_1)\leq 2n-2$ and $d(s)+d(y_2)\leq 2n-2$. By the choice of $P$, $s,y_1$ and $s,y_2$ are two pairs of nonadjacent vertices. Thus we obtain $2d(s)+d(y_1)+d(y_2)\leq 4n-4$, a contradiction by Claim 2. Hence $t=1$.
\end{proof}

\begin{claim}
$R=\emptyset$.
\end{claim}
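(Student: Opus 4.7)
The plan is to argue by contradiction: assume $R \neq \emptyset$, take another component $H'$ of $D - C$, and produce a second pair of nonadjacent vertices whose degree sum matches the upper bound in Claim 1, so that the four relevant degrees sum to at most $4n - 4$, contradicting the $4n-3$ hypothesis of Theorem 5.

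First I would apply strong 2-connectivity to $H'$ exactly as was done for $H$, obtaining a $C$-path $P' = x_{p'} z_1 \cdots z_{t'} x_{p'+\lambda'}$ with respect to $H'$, together with the associated set
\[
S' = \{ v \in C[x_{p'+1}, x_{p'+\lambda'-1}] : v \text{ cannot be inserted into } C[x_{p'+\lambda'}, x_{p'}] \};
\]
the maximality of $C$ again forces $S' \neq \emptyset$. I would then rerun Claims 1 and 3 verbatim with $(H', P', S')$ in place of $(H, P, S)$: Lemma 2 applies unchanged, and Claim 2 is a statement purely about the hypothesis of Theorem 5, so no new ingredient is needed. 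This gives $t' = 1$ and $S' = \{s'\}$, together with the bound $d(z_1) + d(s') \leq 2n - 2$.

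Next I would observe that $\{y_1, s\}$ and $\{z_1, s'\}$ are two distinct pairs of nonadjacent vertices: nonadjacency within each pair comes from the choice of the corresponding $C$-path (as in Claim 3), while the pairs are distinct as sets because $y_1 \in V(H)$ and $z_1 \in V(H')$ lie in different components of $D - C$, hence $y_1 \neq z_1$. The hypothesis of Theorem 5 then yields
\[
d(y_1) + d(s) + d(z_1) + d(s') \geq 4n - 3,
\]
whereas Claim 1, applied to $(P, s)$ and to $(P', s')$, gives the reverse inequality
\[
d(y_1) + d(s) + d(z_1) + d(s') \leq (2n - 2) + (2n - 2) = 4n - 4,
\]
the desired contradiction.

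The only point that needs a moment's care is the boundary case $s = s'$: even then the two pairs are still distinct as sets (since $y_1 \neq z_1$), the hypothesis of Theorem 5 still applies, and the arithmetic still forces $4n - 3 \leq 4n - 4$. Apart from this bookkeeping, the entire argument is a second application of the machinery already assembled in Lemma 2 and Claims 1--3, so I do not foresee any real obstacle.
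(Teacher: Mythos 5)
Your proposal is correct and follows essentially the same route as the paper: take a $C$-path with respect to a component $H'$ of $R$, use Lemma 3 to find a non-insertable vertex $s'$, apply Lemma 2 to get $d(z_1)+d(s')\leq 2n-2$, and combine with $d(y_1)+d(s)\leq 2n-2$ to contradict the degree hypothesis, the pairs being distinct because $y_1\neq z_1$. The only differences are cosmetic: rerunning Claim 3 to get $t'=1$ and $|S'|=1$ is unnecessary (only the existence of $s'$ and the Lemma 2 bound are needed), and your explicit handling of the case $s=s'$ is a small point of care that the paper leaves implicit but that works exactly as you describe.
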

\begin{proof}
Assume that $R\neq \emptyset$. Let $H'$ be a component of $R$. Since $D$ is strongly 2-connected, there is a $C$-path (with respect to $H'$), denoted $P'=x_{q}z_1\ldots z_{t'}x_{q+r}$, where $x_q,x_{q+r}\in V(C)$, $\{z_1,z_2,\ldots,z_{t'}\}\subseteq V(H')$ and the subscripts are taken modulo $k$. If every vertex of $C[x_{q+1},x_{q+r-1}]$ can be inserted into $C[x_{q+r},x_q]$, then by Lemma 3, there is a cycle longer than $C$, a contradiction. Thus there exists at least one vertex in $C[x_{q+1},x_{q+r-1}]$, say $s'$, such that it can not be inserted into $C[x_{q+r},x_q]$. By Lemma 2, we have $d(z_1)+d(s')\leq 2n-2$. Note that $z_1\in V(H')$ is a vertex different from $y_1$, and $s,y_1$ and $s',z_1$ are two distinct pairs of nonadjacent vertices. Thus we obtain $d(y_1)+d(s)+d(z_1)+d(s')\leq 4n-4$, a contradiction by the assumption of Theorem 5.
\end{proof}

\begin{claim}
$H=\{y_1\}$.
\end{claim}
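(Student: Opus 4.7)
Suppose for contradiction that $|H|\geq 2$, and fix some $y_2\in V(H)\setminus\{y_1\}$. The strategy is to mirror the proof of Claim 4: produce a second $C$-path $P'$ (with respect to $H$) whose unique internal vertex $y^*$ is distinct from $y_1$, then combine the bound of Lemma 2 applied to $P'$ with Claim 1 applied to $P$ to violate the hypothesis of Theorem 5 on the two distinct pairs of nonadjacent vertices $\{y_1,s\}$ and $\{y^*,s'\}$.

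The key step is producing $P'$. Since $D$ is strongly $2$-connected, $D-y_1$ is strongly connected. Applying this to the ordered pair $(x_p,y_2)$ yields two internally disjoint $x_p$-to-$y_2$ paths in $D$; at most one uses $y_1$, so the other, call it $Q$, avoids $y_1$. The first vertex of $Q$ lying in $V(H)$ is some $y^*\in V(H)\setminus\{y_1\}$, and its predecessor on $Q$ is a vertex $x_a\in V(C)$, giving $x_a y^*\in A$. A symmetric application to $(y_2,x_{p+\lambda})$ produces $y^{**}\in V(H)\setminus\{y_1\}$ with $y^{**}x_b\in A$ for some $x_b\in V(C)$. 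If $y^*=y^{**}$, then among all arcs $x_jy^*,y^*x_{j'}\in A$ with $x_j,x_{j'}\in V(C)$, choose the pair realizing minimal cyclic distance to obtain a $C$-path $P'=x_q y^* x_{q+r}$ whose sole internal vertex is $y^*\neq y_1$. If $y^*\neq y^{**}$, then concatenating the tail of $Q$ from $y^*$ to $y_2$ with a $y_2$-to-$y^{**}$ path in $D-y_1$ furnishes a walk $x_a y^*\cdots y^{**}x_b$ whose internal vertices all lie in $V(H)\setminus\{y_1\}$ and form at least two distinct vertices; extracting a minimal $C$-path from this range either yields a $C$-path with $t\geq 2$ (contradicting the $t=1$ conclusion of Claim 3 applied to this new $C$-path) or yields a $C$-path whose internal vertex is some $y^*\in V(H)\setminus\{y_1\}$, which is exactly what we want.

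With $P'=x_q y^* x_{q+r}$ in hand, the rest copies Claim 4 verbatim. By Lemma 3, if every vertex of $C[x_{q+1},x_{q+r-1}]$ could be inserted into $C[x_{q+r},x_q]$ we would obtain a cycle longer than $C$; hence some $s'\in C[x_{q+1},x_{q+r-1}]$ cannot be so inserted. Lemma 2 then gives $d(y^*)+d(s')\leq 2n-2$, while Claim 1 gives $d(y_1)+d(s)\leq 2n-2$. By the $C$-path minimality of $P$ and $P'$, the pairs $\{y_1,s\}$ and $\{y^*,s'\}$ are both nonadjacent, and they are distinct because $y^*\neq y_1$. Summing the two inequalities yields $d(y_1)+d(s)+d(y^*)+d(s')\leq 4n-4$, contradicting the hypothesis of Theorem 5.

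The main obstacle is the case $y^*\neq y^{**}$ in the construction of $P'$: one must justify carefully that the concatenated walk through $V(H)\setminus\{y_1\}$ can be reduced to a bona fide $C$-path whose internal vertex avoids $y_1$, using Claim 3 (which guarantees $t=1$ for \emph{every} $C$-path, not merely the initial one) to rule out the degenerate possibility that every $C$-path contained inside the range $[x_a,x_b]$ is forced to use $y_1$.
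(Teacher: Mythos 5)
Your overall strategy --- manufacture a second $C$-path whose internal vertex is some $y^*\neq y_1$, apply Lemma 2 to it, and combine the resulting bound with Claim 1 on two distinct nonadjacent pairs --- is the natural extension of Claim 4, and in the case where such a $C$-path exists your argument is correct. But the crucial step, guaranteeing that a $C$-path of $D$ (with respect to $H$) with internal vertex different from $y_1$ exists, is exactly where the proposal breaks down, and the tool you invoke to close it does not do the job. When you ``extract a minimal $C$-path'' from the range $[x_a,x_b]$, minimality in the sense of the paper's definition is taken over \emph{all} paths with internal vertices in $V(H)$ --- including those through $y_1$ --- and Claim 3 only tells you that the winner has a single internal vertex, not which vertex it is; that vertex may perfectly well be $y_1$, in which case you have learned nothing new. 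If you instead minimize only over paths avoiding $y_1$, you obtain a $C$-path of $D'=D-y_1$, not of $D$, and Lemma 2 then only yields $d_{D'}(z_1)+d_{D'}(s')\leq 2(n-1)-2=2n-4$; lifting this to $D$ requires controlling the four possible arcs between $\{z_1,s'\}$ and $y_1$, and precisely when $y_1$ dominates/is dominated in the ``path-forming'' pattern (e.g.\ $z_1y_1,y_1s'\in A$) the lift fails. This is not a removable technicality: one can have configurations in which every genuine $C$-path of $D$ passes through $y_1$, and even configurations in which a component $H'$ of $H-y_1$ meets $C$ in a single vertex and otherwise communicates only through $y_1$, so that $D'$ admits no $C$-path with respect to $H'$ at all (this is consistent with strong $2$-connectivity).

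The paper's proof of this claim is devoted almost entirely to these two degenerate situations, which your plan leaves open. When $D'$ has no $C$-path with respect to $H'$, the paper abandons Lemma 2 for the new pair altogether and performs a bespoke degree count on the pair $({x_i}^-,y)$, where $x_iy$ is an arc from $C$ into $H'$, showing directly that $d({x_i}^-)+d(y)\leq 2n-2$. When $D'$ does have a $C$-path $x_qz_1\cdots x_{q+r}$, the paper pairs $z_1$ with a non-insertable vertex $x_{i_j}$ of $C$, and in the bad subcase where $y_1$'s adjacencies prevent the degree bound from lifting to $D$, it uses those very arcs through $y_1$ together with Lemma 3 to build a cycle longer than $C$. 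Neither mechanism appears in your proposal, so as written it has a genuine gap. (A smaller inaccuracy: the concatenated walk you build need not have all internal vertices in $V(H)\setminus\{y_1\}$, since the path $Q$ may leave $H$ and re-enter $C$ several times; this is repairable by taking the last excursion into the relevant component, but it is secondary to the main issue.)
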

\begin{proof}
Assume not. Then $H\backslash \{y_1\}\neq \emptyset$. Consider the digraph $D'=D-y_1$. Since $D$ is strongly 2-connected, $D'$ is strongly connected, and thus there is a directed path $P_0$ from $C$ to a component of $H-y_1$, say $H'$. W.l.o.g., let $x_i\in V(C)$ and $y\in H'$ be two vertices such that $x_iy\in A(D')$. Since $D'$ is strongly connected, there is also a directed path from $y$ to $x_i$, say $P'$.

Assume that there is no $C$-path in $D'$. We will show that $d_{D'}({x_i}^-)+d_{D'}(y)\leq 2|V(D')|-2$. First, we have observations that $N^+_{H'-P'}({x_i}^-)\cap N^-_{H'-P'}(y)=\emptyset$ (since otherwise there is a cycle longer than $C$ in $D'$), $N^-_{H'-P'}({x_i}^-)\cap N^+_{H'-P'}(y)=\emptyset$ (since otherwise there exists a $C$-path). It follows that
\begin{align*}
d_{H'-P'}({x_i}^-)+d_{H'-P'}(y)
&=d^+_{H'-P'}({x_i}^-)+d^-_{H'-P'}(y)+d^-_{H'-P'}({x_i}^-)+d^+_{H'-P'}(y)\\
&\leq 2|V(H')\backslash V(P')|
\end{align*}
It is obvious that no vertex in $P'\backslash \{x_i\}$ is a neighbor of ${x_i}^-$, that is, $d_{P'\backslash \{x_{i}\}}({x_i}^-)=0$, and $d_{P'\backslash \{x_{i}\}}(y)\leq 2|V(P'\backslash \{x_{i}\})|-2$. Furthermore, $d_{D'-C-H'}({x_i}^-)\leq 2|V(D')\backslash (V(C)\cup V(H'))|$ and $d_{D'-C-H'}(y)=0$. From the above facts, we obtain $d_{D'-C}({x_i}^-)+d_{D'-C}(y)\leq 2|V(D')\backslash V(C)|-2$. Note that $y$ is nonadjacent to any vertex of $C$ except for $x_i$. It follows that $d_C(y)\leq 2$. On the other hand, $d_{C}({x_i}^-)\leq 2(|C|-1)$. Together with these inequalities, we have $d_{D'}({x_i}^-)+d_{D'}(y)\leq 2|V(D')|-2$. Furthermore, we have $|\{{x_i}^-y_1,y_1y\}\cap A(D)|\leq 1$ and $|\{y_1{x_i}^-,yy_1\}\cap A(D)|\leq 1$, since otherwise there is a longer directed cycle in $D$ or a $C$-path in $D'$ with respect to $H'$, a contradiction. Hence we obtain $d_{D}(y)+d_{D}({x_i}^-)\leq 2|V(D)|-2=2n-2$. Note that $d_{D}(y_1)+d_{D}(s)\leq 2n-2$ by Claim 1 and $y_1$, $y$ are two distinct vertices. Thus we have $d_{D}(y)+d_{D}({x_i}^-)+d_{D}(y_1)+d_{D}(s)\leq 4n-4$, and it contradicts the assumption of Theorem 5.

Assume that there is a $C$-path in $D'$, say $P'=x_{q}z_1\ldots z_{r}x_{q+r}$, where $x_{q},x_{q+r}\in V(C)$. If every vertex in $C[x_{q+1},x_{q+r-1}]$ can be inserted into $C[x_{q+r},x_{q}]$, then by Lemma 3, there is a directed $(x_{q+r},x_q)$-path, say $P_2$, such that $V(P_2)=V(C)$. Then $C'=P_2[x_{q+r},x_q]P'$ is a cycle longer than $C$, contradicting the choice of $C$. Hence there is (are) some vertex (vertices) in $C[x_{q+1},x_{q+r-1}]$ which can not be inserted into $C[x_{q+r},x_{q}]$. W.l.o.g., let $x_{i_1},x_{i_2},\ldots,x_{i_{r'}}$ be such vertex (vertices). By Lemma 1, $d_{D'}(x_{i_j})+d_{D'}(z_1)\leq 2|V(D')|-2$ for any $j\in \{1,2,\ldots,r'\}$. For the vertex $x_{i_1}$, if $|\{x_{i_1}y_1,y_1z_1\}\cap A(D)|\leq 1$ and $|\{z_1y_1,y_1x_{i_1}\}\cap A(D)|\leq 1$, then we obtain $d_{D}(x_{i_1})+d_{D}(z_1)\leq 2|V(D)|-2$. Note that $z_1$ is a vertex different from $y_1$. We have $d(x_{i_1})+d(z_1)+d(s)+d(y_1)\leq 4n-4$, a contradiction. If $z_1y_1,y_1x_{i_1}\in A(D)$, then  note that every vertex of $C(x_q,x_{i_1})$ can be inserted into $C[x_{q+r},x_q]$. Let $C'[x_{q+r},x_q]$ be the resulting path by inserting all vertices of $C(x_q,x_{i_1})$ into $C[x_{q+r},x_q]$. Then $C'=P'[x_q,z_1]z_1y_1x_{i_1}C[x_{i_l},x_{q+r}]C'[x_{q+r},x_q]$ is a longer cycle in $D$, a contradiction. Thus $x_{i_1}y_1,y_1z_1\in A(D)$. By a similar argument as above, we continue this procedure and deduce that $x_{i_{r'}}y_1,y_1z_1\in A(D)$. Now consider the path $P''=x_{i_{r'}}y_1z_1P'[z_1,x_{q+r}]$. Since every vertex in $C(x_{i_{r'}},x_{q+r})$ can be inserted into $C[x_{q+r},x_q]$, we can find a longer cycle in $D$ by a similar argument as above, a contradiction.

This proves this claim.
\end{proof}

By Claim 5, the length of $C$ is $n-1$. The proof is complete. {\hfill$\Box$}

\section{Concluding remarks}
Manousskis \cite{Manoussakis} gave a new type of degree condition for a digraph to be hamiltonian, and it opened up a new area of Hamiltonicity of digraphs for further study. Up to now, there are some results concerning pancyclicity of digraphs with respect to the theorems of Ghouila-Houri, Woodall and Meyniel, respectively. See \cite{Haggkvist_Thomassen,Thomassen}. It is natural to ask whether we can find a similar result for pancyclicity of digraphs under Manoussakis-type degree condition or not. In \cite{Manoussakis}, Manoussakis proposed the following conjecture.

\begin{conjecture}[Manoussakis \cite{Manoussakis}]
Any strongly connected digraph such that for any triple of vertices $x,y,z\in V$, where $x$ is nonadjacent to $y$, there hold $d(x)+d(y)+d^+(x)+d^-(z)\geq 3n+1$ (if $xz\notin A$) and $d(x)+d(y)+d^+(z)+d^-(x)\geq 3n+1$ (if $zx\notin A$) is pancyclic.
\end{conjecture}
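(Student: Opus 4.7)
My plan is to split the conjecture into two stages: first establish Hamiltonicity, then upgrade it to pancyclicity by excluding any missing intermediate cycle length. For the first stage, observe that $3n+1>3n-2$, so the hypothesis immediately implies that of Manoussakis's Theorem~4 above, giving a Hamilton cycle $C=v_0v_1\cdots v_{n-1}v_0$ in $D$ (indices read modulo $n$). What remains is to produce, for every $\ell\in\{2,3,\dots,n-1\}$, a cycle of length exactly $\ell$.

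For the pancyclicity stage, suppose for contradiction that some length $\ell\in\{2,\dots,n-1\}$ is missing from the spectrum of cycle lengths, and, as is standard in Bondy-type pancyclicity proofs, take $\ell$ minimal with this property. I would first handle $\ell\geq 3$ and treat $\ell=2$ separately, since $\ell=2$ means $D$ is an oriented graph and the bound $d(v)\leq n-1$ must be folded in by hand. For $\ell\geq 3$, I would analyze the chord structure of $C$ relative to $\ell$: any single chord $v_iv_j$ with $j-i\equiv n-\ell+2\pmod n$ would close with $C[v_j,v_i]$ into an $\ell$-cycle, and more generally any $k$-tuple of chords that jointly shortcut $C$ by $n-\ell$ arcs is forbidden. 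This forbidden-chord analysis forces many pairs of vertices along $C$ to be pairwise nonadjacent. The target is then to locate a triple $x,y,z$ with $x$ nonadjacent to both $y$ and $z$, positioned so that the forbidden-chord constraints give a sharp upper bound on $d(x)+d(y)+d^+(x)+d^-(z)$ (or its symmetric variant with $d^+(z)+d^-(x)$) contradicting the hypothesis $\geq 3n+1$. In parallel, one should apply the Bondy--Thomassen insertion lemma (Lemma~1) to the two arcs of $C$ flanking a nonadjacent pair, as this is the standard way to convert absence of an $\ell$-cycle into linear degree bounds, mirroring how Lemma~2 of the present paper converts absence of a $C$-path extension into the quantity $d(y_i)+d(s)\leq 2n-2$.

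The main obstacle, and presumably why Conjecture~2 remains open, is the mixed and asymmetric shape of the expression $d(x)+d(y)+d^+(x)+d^-(z)$: three of the four summands are full degrees while the fourth is only a one-sided degree attached to a third, possibly adjacent vertex $z$. Translating a symmetric combinatorial obstruction (``no $\ell$-cycle'') into a sharp upper bound on this specifically mixed quantity is delicate, since Lemma~1 above bounds $d_P(v)$ only symmetrically and gives no direct control on $d^+$ versus $d^-$ separately. I expect the proof to require either a refinement of Lemma~1 that separately bounds the in-degree and out-degree of a vertex into a path when that vertex cannot be inserted, or a rotation-extension argument that replaces $C$ by a whole family of Hamilton cycles in order to accumulate one-sided forbidden arcs pointing to or from a common vertex $z$. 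A downward induction on $\ell$ combined with a single-chord swap inside an already-produced $(\ell+1)$-cycle is a plausible alternative route, but identifying the correct triple $x,y,z$ at each inductive step, and squeezing the mixed degree sum tight enough to beat $3n+1$ rather than merely $3n-2$, is where the conjecture presumably stands or falls.
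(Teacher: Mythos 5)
The statement you were asked to prove is not a theorem of this paper at all: it is Conjecture 2, stated in the concluding remarks as an open problem of Manoussakis, and the paper offers no proof of it. So there is nothing in the paper to compare your argument against, and the honest admission at the end of your write-up --- that the central difficulty ``is where the conjecture presumably stands or falls'' --- is the correct assessment of the state of the art.

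As a proof attempt, however, your text has a genuine and decisive gap: only the first stage is an actual argument. Deducing a Hamilton cycle from Theorem 4 is fine, since $3n+1>3n-2$ and strong connectivity is part of the hypothesis. But the entire pancyclicity stage is written in the conditional mood (``I would analyze\ldots'', ``The target is then to locate\ldots'', ``I expect the proof to require either\ldots''): no forbidden-chord lemma is actually stated or proved, no triple $x,y,z$ is actually exhibited, and no upper bound on $d(x)+d(y)+d^+(x)+d^-(z)$ is actually derived. You correctly identify the obstruction --- the asymmetry of the degree condition, with three full degrees and one one-sided degree attached to a vertex $z$ that need not be nonadjacent to anything, which the symmetric insertion bound of Lemma 1 does not control --- but identifying an obstruction is not the same as overcoming it. A second, smaller issue: your treatment of $\ell=2$ is also only gestured at; in an oriented graph one gets $d(v)\leq n-1$ for every vertex, but the resulting bound $d(x)+d(y)+d^+(x)+d^-(z)\leq 4n-4$ does not contradict $3n+1$ for $n\geq 5$, and if all pairs of vertices are adjacent the hypothesis is vacuous, so even this ``easy'' case would need a genuinely different argument (e.g.\ via Moon's theorem on vertex-pancyclic tournaments). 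In short, what you have is a reasonable research plan for an open conjecture, not a proof.
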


Following \cite{Li_Flandrin_Shu}, for a subset $S$ of the vertex set of $D$, we say that $S$ is \emph{cyclable} if there
is a directed cycle in $D$ passing through all vertices of $S$. Berman \& Liu \cite{Berman_Liu} and Li, Flandrin and Shu \cite{Li_Flandrin_Shu} gave
a cyclable version of Meyniel's theorem, independently. Li, Flandrin and Shu \cite{Li_Flandrin_Shu} also proposed the following problem.

\begin{problem}[Li, Flandrin and Shu \cite{Li_Flandrin_Shu}]
Is there a cyclable version of Theorem 4?
\end{problem}

All these problems may stimulate our further study for hamiltonian property of digraphs under Manoussakis-type degree condition.

\section*{Acknowledgements}
The author is supported by NSFC (No. 11271300) and the Doctorate Foundation of Northwestern Polytechnical University (cx201326). He is indebted to Dr. Jun Ge and Dr. Binlong Li for helpful discussions.


\begin{thebibliography}{10}
\bibitem{Bang_Gutin}
J. Bang-Jensen and G. Gutin, Digraphs: Theory, Algorithms and Applications, Springer, London, 2000.

\bibitem{Berman_Liu}
K.A. Berman and X. Liu, Cycles through large degree vertices in digraphs: a generalization of Meyniel's theorem, \emph{J. Combin. Theory Ser. B} {\bf 74} (1998), 20--27.

\bibitem{Bondy_Thomassen}
J.A. Bondy and C. Thomassen, A short proof of Meyniel's theorem, \emph{Discrete Math.} {\bf 19} (1977), 195--197.

\bibitem{Ghouila_Houri}
A. Ghouila-Houri, Une condition suffisante d'existence d'un circuit hamiltonien, \emph{CR Acad. Sci. Paris} {\bf 25} (1960), 495--497.

\bibitem{Haggkvist_Thomassen}
R. Haggkvist and C. Thomassen, On pancyclic digraphs, \emph{J. Combin. Theory Ser. B} {\bf 20} (1976), 20--40.

\bibitem{Li_Flandrin_Shu}
H. Li, E. Flandrin and J. Shu, A sufficient condition for cyclability in directed graphs, \emph{Discrete Math.} {\bf 307} (2007), 1291--1297.

\bibitem{Manoussakis}
Y. Manoussakis, Directed hamiltonian graphs, \emph{J. Graph Theory} {\bf 16} (1992), 51--59.

\bibitem{Meyniel}
H. Meyniel, Une condition suffisante d'existence d'un circuit hamiltonien dans un graphe oriente, \emph{J. Combin. Theory Ser. B} {\bf 14} (1973), 137--147.

\bibitem{Overbeck}
M. Overbeck-La-isch, Hamiltonian paths in oriented graphs, \emph{J. Combin. Theory Ser. B} {\bf 21} (1976), 76--80.

\bibitem{Thomassen}
C. Thomassen, An Ore-type condition implying a digraph to be pancyclic, \emph{Discrete Math.} {\bf 19} (1977), 85--92.

\bibitem{Woodall}
D.R.Woodall, Sufficient conditions for cycles in digraphs, \emph{Proc. London Math. Soc.} {\bf 24} (1972), 739--755.
\end{thebibliography}
\end{document}